\newtheorem{theorem}{Theorem}[section]
\newtheorem{prop}[theorem]{Proposition}
\theoremstyle{definition}
\theoremstyle{remark}
\numberwithin{equation}{section}
\newcommand{\bC}{{\mathbf{C}}}
\newcommand{\bF}{{\mathbf{F}}}
\newcommand{\Irr}{{\operatorname{Irr}}}
\newcommand{\cl}{{\operatorname{cl}}}
\newcommand{\Hall}{{\operatorname{Hall}}}
\let\nor=\triangleleft
\begin{document}

\title{Orbits of finite solvable groups on characters}

\author{Thomas Michael Keller}
\address{Department of Mathematics, Texas State University at San Marcos, 601 University Drive, San Marcos, TX 78666, USA.}
\author{YONG YANG}
\address{Department of Mathematics, University of Wisconsin at Parkside, 900 Wood Road, Kenosha, WI 53144, USA.}
\makeatletter
\email{keller@txstate.edu, yangy@uwp.edu}
\makeatother
\thanks{Part of this work was done while the first author was on sabbatical leave and
was visiting the University of Wisconsin-Parkside. He thanks the Mathematics Department
there for its hospitality.}

\subjclass[2000]{20C20}
\date{}



\begin{abstract}
We prove that if a solvable group $A$ acts coprimely on a solvable group $G$, then $A$ has a ``large" orbit in its corresponding action
on the set of ordinary complex irreducible characters of $G$. This extends (at the cost of a weaker bound) a 2005 result of A. Moret\'o
who obtained such a bound in case that $A$ is a $p$-group.
\end{abstract}

\maketitle
\Large
\section{Introduction} \label{sec:introduction8}

The main purpose of this paper is to generalize a 2005 result of A. Moret\'o on orbits in a certain group action. While there are many results on the orbits in the action of a group acting via automorphisms on some other group (of which the action of linear groups on their natural modules is a particularly prominent example), Moret\'o's result is noteworthy in that it is one of the very few dealing with the action of a group on the set of irreducible characters of another group. \\

More precisely, let the finite group $A$ act (via automorphisms)
on the finite group $G$. Such an action induces an action of $A$ on the set $\Irr(G)$ in an obvious way (where $\Irr(G)$ denotes the set
of complex irreducible characters of $G$). When $G$ is elementary abelian, we are back to studying linear group actions and all the known results apply. But for nonabelian $G$ not much is known about this interesting action. Note that when $(|A|,|G|)=1$, then it is well-known that the orbit sizes of $A$ on $\Irr(G)$ are the same as the orbit sizes in the natural action of $A$ on the conjugacy classes of $G$, and this latter action was of some importance in \cite{KELL}, where some specialized results on this action were obtained. But apart from these we are aware only of two major results on the action of $A$ on $\Irr(G)$. \\

The first such result is due to D. Gluck ~\cite{GLUCK2}. He proved that when $A$ is abelian and $G$ is solvable, then there always exists an ``arithmetically large" orbit on $\Irr(G)$ (i.e., an orbit whose size is divisible by ``many" different primes). \\

The second result is the 2005 result \cite{MO1} by Moret\'o mentioned above. It proves the existence of a ``large" orbit on $\Irr(G)$ in case that $A$ is a $p$-group for some prime $p$ and $G$ is solvable such that $(|A|,|G|)=1$. \\

In this paper we take the second result to the next level and establish the existence of a large orbit on $\Irr(G)$ in case that $A$ is solvable and $G$ is solvable such that $(|A|,|G|)=1$. More precisely, our main result is the following.\\

{\bf Theorem A.} {\it Let $A$ and $G$ be finite solvable groups such that $A$ that acts faithfully and coprimely on $G$. Let $b$ be an integer such that $|A : \bC_A(\chi)| \leq b$ for all $\chi \in \Irr(G)$. Then $|A| \leq b^{49}$.}\\

We make a few observations. First, it has already been observed in \cite{MO1} that the coprimeness assumption cannot be omitted.\\

Second, not surprisinglyly, our bound is weaker than the bound obtained in  \cite{MO1}. While in \cite{MO1}, when $A$ is a $p$-group, the existence of an orbit of size roughly $|A|^{\frac{1}{19}}$ on $\Irr(G)$ is proved, in the more general situation of Theorem A we only get an orbit of size about $|A|^{\frac{1}{49}}$. Both bounds, however, are far from best possible anyway and thus the results are more of a qualitative nature. The true bound is probably close to $|A|^{\frac{1}{2}}$ in both cases.\\

Third, if we assume $|AG|$ to contain no small primes, the bounds one can get tend to be much better. In \cite{MO1} it is observed in passing that if odd order is assumed, then the bound will get much better. And here we will explicitly establish a better bound in case that $|G|$ is not divisible by 6 (see Theorem ~\ref{coprimeactionchar23} below).\\

Our proof of Theorem A is largely based on the ideas introduced in \cite{MO1} and extends them to the more general hypothesis of Theorem A. In particular, our proof does not use Moret\'o's result, but rather reproves it (with a weaker bound) as a special case. To keep the bound from getting too large, we also make use of a recent strenghtening in the solvable case of a result by M. Aschbacher and R. Guralnick \cite{ASCHGURA} on the size of $|G/G'|$ of a linear solvable group; see ~\ref{orbitabelianquotient}.

\section{The abelian quotient of linear groups} \label{sec:orbittheorem}

We first recall a result due to Aschbacher and Guralnick.

\begin{prop}\label{prop1}
Let $G$ be a finite solvable group that acts faithfully and completely reducibly on a finite vector space $V$. Then $|G:G'| \leq |V|$.
\end{prop}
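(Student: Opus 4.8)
The plan is to induct on $\dim V$, keeping complete reducibility as the inductive class and reducing successively to the irreducible and then the primitive case. Throughout I use that a submodule of a completely reducible module is completely reducible and that, by Clifford's theorem, the restriction of a completely reducible module to a normal subgroup is again completely reducible.

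First I would dispose of the \emph{decomposable} case, i.e.\ whenever $V$ is not irreducible. Write $V=V_1\oplus V'$ with $V_1$ an irreducible $G$-submodule and $V'\neq 0$ the sum of the remaining constituents, and set $C=C_G(V_1)\nor G$. Then $C$ embeds into $\GL(V')$ (it centralizes $V_1$, and $C\cap C_G(V')\le C_G(V)=1$) and acts completely reducibly there, so by induction $|C:C'|\le|V'|$; likewise $G/C$ acts faithfully and irreducibly on $V_1$ with $\dim V_1<\dim V$, so $|G/C:(G/C)'|\le|V_1|$. Multiplicativity of the index along the chain $G'\le G'C\le G$, together with $G/G'C\cong(G/C)^{ab}$ and $G'C/G'\cong C/(C\cap G')$ (a quotient of $C^{ab}$ since $C'\le G'$), then gives
\[
|G:G'|=|(G/C)^{ab}|\cdot|C/(C\cap G')|\le|V_1|\cdot|V'|=|V|.
\]
This settles every non-irreducible case, so it remains to treat $V$ irreducible.

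Next comes the \emph{imprimitive} reduction. If $V$ is irreducible but imprimitive, fix a system of imprimitivity $V=V_1\oplus\cdots\oplus V_m$ with $m\ge 2$ and $W:=V_1$ irreducible under $H:=\Stab_G(V_1)$, so that $V$ is induced from $W$ and $|V|=|W|^m$. Let $K\nor G$ be the kernel of the transitive action of $G$ on the $m$ blocks, $P=G/K\le\fS_m$ a transitive solvable group, and embed $G\hookrightarrow\overline H\wr P$ with $\overline H=H/C_H(W)$ acting faithfully and irreducibly on $W$. The key point is that conjugation by $G$ permutes the $m$ coordinate copies of $\overline H$ transitively, so a transfer-type ``sum over coordinates'' map collapses the base contribution in the abelianization to a single copy of $\overline H^{ab}$; this yields $|G:G'|\le|\overline H^{ab}|\cdot|P^{ab}|$ up to the same chain bookkeeping as above. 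Since $\dim W<\dim V$, induction gives $|\overline H^{ab}|\le|W|$, and one is left to check $|P^{ab}|\le|W|^{m-1}$; because $|W|\ge 2$ this follows from the elementary bound $|P^{ab}|\le 2^{m-1}$ for transitive solvable $P\le\fS_m$ (provable by induction along a block decomposition of $P$, and with ample room to spare once $|W|\ge 3$). Making the coordinate-collapse precise for the \emph{subgroup} $G$ of the wreath product, rather than for the full wreath product, is the one delicate point here.

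Finally, the \emph{primitive} case is the main obstacle. Here I would invoke the structure theory of primitive solvable linear groups: $V$ is homogeneous for $F=F(G)$, whose action singles out a field $K\supseteq\FF_p$ with $Z(F)$ acting by scalars, $|K|=q$, $\dim_K V=d$, and $|V|=q^d$; moreover $F/Z(F)$ is a product of symplectic-type groups with $C_G(F)=Z(F)$, and $G/F$ is a solvable subgroup of a product of symplectic groups and field automorphisms. One then bounds $|G:G'|$ by estimating separately the scalar contribution of $Z(F)$ (of order dividing $q-1$), the contribution of the symplectic-type layer, and that of $G/F$, and verifying that their product stays below $q^d=|V|$. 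Reaching the sharp constant $1$ (rather than a weaker $c\cdot|V|$) is exactly where the solvable-case strengthening of the Aschbacher--Guralnick estimate enters, and I expect this primitive bookkeeping, together with the rigorous treatment of the coordinate-collapse in the imprimitive step, to be where the genuine work lies; the decomposable reduction is routine.
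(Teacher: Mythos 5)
The paper does not actually prove this proposition: its ``proof'' is a one-line citation, quoting the statement as a special case of \cite[Theorem 3]{ASCHGURA}. So the relevant question is whether your attempt stands on its own as a direct proof, and it does not. Your decomposable reduction is correct and complete: the chain $G' \le G'C \le G$ with $C = \bC_G(V_1)$, Clifford's theorem for complete reducibility of $V'$ as a $C$-module, and the inequality $|G:G'| \le |(G/C):(G/C)'| \cdot |C:C'| \le |V_1|\cdot|V'| = |V|$ all check out. But the two points you yourself flag as ``delicate'' are genuine gaps, not bookkeeping. In the imprimitive step, the transfer-type collapse map only gives a homomorphism from $N/[N,G]$ (where $N$ is the block kernel) into $\overline{H}^{\mathrm{ab}}$; what you need is that $|N/(N\cap G')|$ is bounded by $|\overline{H}^{\mathrm{ab}}|$, i.e.\ essentially injectivity of that map on the relevant quotient, and no argument is given. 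Moreover, the auxiliary bound $|P^{\mathrm{ab}}| \le 2^{m-1}$ for transitive solvable $P \le \fS_m$ is not elementary: it is itself one of the results of the same Aschbacher--Guralnick circle of ideas, proved by an induction that intertwines the permutation-group statement with the linear-group statement you are trying to establish, so it cannot be imported as a known fact without circularity.

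The primitive case is the heart of the theorem, and your text contains no actual estimate there: ``one then bounds \dots\ and verifying that their product stays below $q^d$'' is a description of what must be done, and doing it (controlling the scalar part $Z(F)$, the symplectic-type layer $F/Z(F)$, and $G/F$ simultaneously, with the sharp constant $1$) is precisely the content of the Aschbacher--Guralnick proof. Your closing remark is also backwards: you propose to invoke ``the solvable-case strengthening of the Aschbacher--Guralnick estimate'' to finish, but that strengthening is Theorem \ref{orbitabelianquotient} of this paper (from \cite{KELLYANG}), which is a later sharpening of the very statement being proved; appealing to it would replace the paper's citation by a stronger citation, not by a proof. In short: your outline correctly reproduces the standard reduction scheme, but everything past the decomposable case is deferred rather than proved.
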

\begin{proof}
This is a special case of the much more general \cite[Theorem 3]{ASCHGURA}.
\end{proof}

Next we provide a recent strengthening of Proposition ~\ref{prop1} by the authors.

\begin{theorem}\label{orbitabelianquotient}
Let $G$ be a finite solvable group that acts faithfully and completely reducibly on a finite vector space $V$, and let $B$ be the size of the largest orbit of $G$ on $V$. Then $|G:G'| \leq B$.
\end{theorem}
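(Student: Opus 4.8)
The plan is to recast the conclusion as the existence of a single good vector and then induct. Since the largest orbit has size $|G:G_v|$ for the vector $v$ realizing it, the inequality $|G:G'|\le B$ is equivalent to producing some $v\in V$ with $|G_v|\le|G'|$, where $G_v$ denotes the stabilizer. I would prove this reformulated statement by induction on the pair $(|G|,\dim V)$ ordered lexicographically. The abelian case is both immediate and instructive: writing $V=V_1\oplus\cdots\oplus V_k$ as a sum of irreducible $G$-submodules (complete reducibility), for each $i$ the fixed space of any element on $V_i$ is a submodule, so by irreducibility and faithfulness every nonzero vector of $V_i$ has trivial image-stabilizer; choosing $v=(v_1,\dots,v_k)$ with all $v_i\ne 0$ gives $G_v=\bigcap_i\bC_G(V_i)=1$, a regular orbit, whence $B=|G|=|G:G'|$.

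Next I would reduce to the case that $V$ is an irreducible $G$-module. If $V=V_1\oplus W$ with $V_1$ a nontrivial irreducible submodule and $W$ a proper complement, set $M=\bC_G(V_1)<G$. Applying the inductive hypothesis to $G/M$ on $V_1$ produces $v_1$ whose $(G/M)$-orbit has size at least $|G/M:(G/M)'|=|G:G'M|$, that is $|G_{v_1}:M|\le|G':G'\cap M|$; then applying it to a faithful quotient of the stabilizer $G_{v_1}$ acting on $W$ produces $w$ cutting the stabilizer down further, and one takes $v=(v_1,w)$. The delicate point — and the first real obstacle — is that the two reduction steps must be made to \emph{add} their exponents on a single vector rather than multiply the bound: a naive two-step induction only yields $|G:G'|\le B^2$. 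Obtaining the clean bound requires choosing $v_1$ in a largest orbit so that the residual group acting on $W$ has abelianization at least as large as the missing factor $|M:M\cap G'|$, exploiting that the normal subgroup $M$ embeds into that residual group.

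With $V$ irreducible, I would split off the primitive case. If $G$ acts imprimitively, it preserves a system of imprimitivity $V=U_1\oplus\cdots\oplus U_t$ with $t>1$ transitively permuted, and $G$ embeds into the wreath product of the block-stabilizer action with the permutation action on the blocks. Induction applied to the (smaller) stabilizer of a block acting on $U_1$, together with the transitive permutation of the blocks, should again be assembled into a single vector of large orbit, subject to the same no-loss care as above.

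The remaining, and main, obstacle is the primitive case. Here I would invoke the structure theory of primitive solvable linear groups: $G$ has a normal subgroup chain built from its Fitting subgroup and extraspecial (symplectic-type) normal subgroups controlling $V$, and $|G:G'|$ can be tied to this structure through the Aschbacher–Guralnick bound of Proposition \ref{prop1}. The aim is to exhibit a vector with stabilizer of order at most $|G'|$; I expect this to follow from a regular-orbit analysis for primitive solvable linear groups (every such group has a regular orbit on $V$ apart from a short, explicitly listable family of small exceptions), with the finitely many exceptional configurations checked directly against $B\ge|G:G'|$. Marrying this regular-orbit input to Proposition \ref{prop1} so that the constant stays exactly $1$, rather than degrading through the successive reductions, is the crux of the argument.
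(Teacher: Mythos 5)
First, a point of reference: the paper does not actually prove this theorem --- its ``proof'' is a citation to the preprint \cite{KELLYANG}, where the result (a genuine research-level strengthening of Proposition~\ref{prop1}) is established. So your proposal must be judged on its own, and as written it is a plan rather than a proof: each of the three steps that you yourself flag as ``the first real obstacle,'' ``the main obstacle,'' and ``the crux'' is left unresolved. Moreover, two of these gaps are not merely unfinished but defective as set up. In the reduction to an irreducible module you apply the inductive hypothesis to (a faithful quotient of) the stabilizer $G_{v_1}$ acting on $W$; but $G_{v_1}$ is not normal in $G$, no coprimality is assumed anywhere, and a subgroup of a completely reducible linear group need not act completely reducibly (a Sylow $p$-subgroup of $\GL(2,p)$ already fails), so the inductive hypothesis simply does not apply to that action. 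In addition, the kernel $\bC_{G_{v_1}}(W)$ need not lie in $G'$, which is exactly why the naive combination of the two steps multiplies the bounds instead of adding exponents; your proposed remedy (``the normal subgroup $M$ embeds into that residual group'') is a hope, not an argument.

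Second, the primitive case rests on a false premise. It is not true that primitive solvable linear groups have regular orbits apart from a short list of small exceptions: for every prime power $q=p^f$ with $f>1$, the semilinear group $\Gamma L(1,q)$ acts primitively on $V=\FF_q$ (it is even transitive on the nonzero vectors), yet the stabilizer of every nonzero vector has order exactly $f$, so there is never a regular orbit. This is an infinite family of exceptions, and the symplectic-type primitive groups with small $e$ contribute further infinite families. Handling precisely these cases --- semilinear groups and small extraspecial configurations --- is where the bulk of the work in \cite{KELLYANG} lies, via the Manz--Wolf structure theory of quasi-primitive solvable linear groups and quantitative large-orbit theorems, not via a finite check. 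So while your overall skeleton (regular orbits for abelian groups, reduction to irreducible, imprimitive versus primitive dichotomy) is the natural one and your reformulation of the statement as finding $v$ with $|G_v|\leq |G'|$ is correct, the proposal has genuine gaps at every point where the theorem is actually hard, and its key external input does not exist in the form you invoke it.
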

\begin{proof}
This will appear in \cite{KELLYANG}.
\end{proof}

\section{Main Theorem} \label{sec:maintheorem}

The following result is an extension of ~\cite[Lemma 2.1]{MO1}.

\begin{theorem} \label{pigroupaction}
Assume that a solvable $\pi$-group $A$ acts faithfully on a solvable $\pi'$-group $G$. Let $b$ be an integer such that $|A : \bC_A(\chi)| \leq b$ for all $\chi \in \Irr(G)$. Let $\Gamma = AG$ be the semidirect product. Let $K_{i+1}=\bF_{i+1}(\Gamma)/\bF_i(\Gamma)$ and let $K_{i+1, \pi}$ be the Hall $\pi$-subgroup of $K_{i+1}$ for all $i \geq 1$. Let $K_i/\Phi(\Gamma /\bF_{i-1}(\Gamma))=V_{i1}+V_{i2}$ where $V_{i1}$ is the $\pi$ part of $K_i/\Phi(\Gamma/\bF_{i-1}(\Gamma))$ and $V_{i2}$ is the $\pi'$ part of $K_i/\Phi(\Gamma/\bF_{i-1}(\Gamma))$ for all $i \geq 1$. Let $K \nor \Gamma$ such that $\bF_i(\Gamma) \nor K$. Let $L_{i+1, \pi}=K_{i+1, \pi} \cap K$. We have that $|\bC_{L_{i+1, \pi}}(V_{i1})| \leq b^2$, and $|\bC_{L_{i+1, \pi}}(V_{i1})| \leq b$ if $L_{i+1, \pi}$ is abelian. The order of the maximum abelian quotient of $\bC_{L_{i+1, \pi}}(V_{i1})$ is less than or equal to $b$ for all $i \geq 1$. 
\end{theorem}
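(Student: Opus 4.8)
The plan is to realize $H:=\bC_{L_{i+1,\pi}}(V_{i1})$ as a solvable group acting \emph{faithfully and completely reducibly} on $V_{i2}$, and then to feed this into Theorem~\ref{orbitabelianquotient}. Set $\bar\Gamma=\Gamma/\bF_{i-1}(\Gamma)$, so that $\bF(\bar\Gamma)=K_i$ and, by Gasch\"utz's theorem, $K_i/\Phi(\bar\Gamma)=V_{i1}+V_{i2}$ is a faithful completely reducible module for $\bar\Gamma/\bF(\bar\Gamma)=\Gamma/\bF_i(\Gamma)$. Since $L_{i+1,\pi}\le K_{i+1,\pi}\le \bF_{i+1}(\Gamma)/\bF_i(\Gamma)\le \Gamma/\bF_i(\Gamma)$, the whole of $L_{i+1,\pi}$ --- and a fortiori $H$ --- acts faithfully on $V_{i1}+V_{i2}$. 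As $H$ centralizes $V_{i1}$ by definition, its kernel on $V_{i2}$ coincides with its kernel on $V_{i1}+V_{i2}$, which is trivial; hence $H$ acts faithfully on $V_{i2}$. Finally, $H$ is a $\pi$-group and $V_{i2}$ is a $\pi'$-module, so the action is coprime and Maschke's theorem makes $V_{i2}$ a completely reducible $H$-module.

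Once faithful complete reducibility is in hand, Theorem~\ref{orbitabelianquotient} yields $|H:H'|\le B$, where $B$ denotes the largest orbit of $H$ on $V_{i2}$; the whole task thus reduces to proving $B\le b$.

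For the orbit bound I would pass through the coprime dictionary between a group and its characters. Because $A$ is a $\pi$-group acting coprimely on the $\pi'$-group $G$, the $\pi'$-section $V_{i2}$ of $\Gamma$ is $A$-isomorphic to an $A$-invariant abelian $\pi'$-section $M/L$ of $G$, and for coprime action the $A$-sets $V_{i2}$ and $\Irr(M/L)$ have the same orbit sizes. A single vector $v\in V_{i2}$ thus corresponds to a single linear character $\lambda$ of $M/L$; by coprime Clifford theory and the $A$-equivariant Glauberman--Isaacs correspondence one can lift $\lambda$ to a character $\chi\in\Irr(G)$ with $\bC_A(\chi)\le \bC_A(\lambda)$. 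Since the action of $H$ on $V_{i2}$ is a subquotient of the action of $A$ on $M/L$, this gives
\[
|H:\bC_H(v)|\le |A:\bC_A(\lambda)|\le |A:\bC_A(\chi)|\le b,
\]
and as $v$ was arbitrary we conclude $B\le b$, whence $|H:H'|\le b$. The companion bounds (order at most $b^2$, and at most $b$ when $H$ is abelian) follow along the same lines, using Proposition~\ref{prop1} together with known orbit-size results for coprime solvable linear groups --- including a regular orbit in the abelian case --- in place of Theorem~\ref{orbitabelianquotient}.

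The step I expect to be the main obstacle is the last one: manufacturing, for every $v\in V_{i2}$, an honest irreducible character $\chi$ of $G$ whose $A$-stabilizer is contained in that of the corresponding section character $\lambda$. This requires propagating $\lambda$ up the Fitting series of $G$ by repeated coprime extension and correspondence steps while keeping the stabilizer under control, and verifying that the identification of $V_{i2}$ with a single section $M/L$ (so that $\bC_H(v)$ is read off from one character rather than from an intersection over several chief factors) does not cost any stabilizer control.
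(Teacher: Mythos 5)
Your skeleton matches the paper's own proof: realize $H=\bC_{L_{i+1,\pi}}(V_{i1})$ as a group acting faithfully and completely reducibly on the $\pi'$-part $V_{i2}$, bound its orbit sizes by $b$ by lifting characters of that section to $\Irr(G)$, and then feed the orbit bound into Theorem~\ref{orbitabelianquotient} (together with a regular-orbit result in the abelian case and a square bound for the full order). But the step you yourself flag as ``the main obstacle'' is precisely the heart of the matter, and the tool you propose does not deliver it. You need, for each $\lambda$ (the character corresponding to $v\in V_{i2}$), some $\chi\in\Irr(G)$ lying over $\lambda$ with $\bC_A(\chi)\le\bC_A(\lambda)$; this is what justifies the middle inequality in your chain $|H:\bC_H(v)|\le|A:\bC_A(\lambda)|\le|A:\bC_A(\chi)|\le b$. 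Equivariant correspondences of Glauberman--Isaacs type give the \emph{opposite} containment: they produce a lift invariant under everything that fixes $\lambda$, i.e.\ $\bC_A(\lambda)\le\bC_A(\chi)$. And in general an irreducible character over $\lambda$ can be stabilized by elements that move $\lambda$ (its stabilizer need only permute the $G$-conjugates of $\lambda$), so the containment you want cannot simply be quoted; as stated, your argument has a genuine gap at its only nontrivial point.

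The paper closes this gap with a two-sided argument. First it passes to a Hall $\pi$-subgroup $Q$ of the preimage $L$ of $H$ and conjugates so that $Q\le A$, so the hypothesis $|A:\bC_A(\chi)|\le b$ applies verbatim to $Q$ (a reduction you gesture at with ``subquotient of the action of $A$'' but do not carry out). Then \cite[Theorem 13.28]{IMIB} yields $\chi\in\Irr(G)$ over $\lambda$ that is $\bC_Q(\lambda)$-invariant, giving $\bC_Q(\lambda)\le\bC_Q(\chi)$; the reverse containment is then forced by a degree computation: $|Q:\bC_Q(\lambda)|$ divides the degree of every character of $QG$ over $\lambda$ (using that the preimage of $L$ is normal in $\Gamma$), while \cite[Corollary 8.16]{IMIB} and the Clifford correspondence produce $\psi\in\Irr(QG)$ over $\chi$ with $\psi(1)_\pi=|Q:\bC_Q(\chi)|$, so $|Q:\bC_Q(\lambda)|$ divides $|Q:\bC_Q(\chi)|$ and hence $\bC_Q(\chi)=\bC_Q(\lambda)$. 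Some substitute for this divisibility argument is indispensable. Two smaller points: the square bound $|H|\le b^2$ comes from Isaacs' large-orbit theorem for \emph{nilpotent} groups \cite{IMI2} (note $H$ is nilpotent, sitting inside $\bF_{i+1}(\Gamma)/\bF_i(\Gamma)$); no bound of that quality is available for arbitrary coprime solvable linear groups, so your vaguer citation would not suffice. Also, the paper works throughout with the action on $\Irr(V_{i2})$ (faithful by a theorem of Brauer) rather than on $V_{i2}$ itself, which avoids your detour through the identification of $V_{i2}$ with a section $M/L$ of $G$ and the comparison of orbit sizes on a group versus its dual.
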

\begin{proof}

We know that $L_{i+1, \pi}$ acts faithfully and completely reducibly on $V_{i1}+V_{i2}$. Clearly $\bC_{L_{i+1, \pi}}(V_{i1})$ acts faithfully and completely reducibly on $V_{i2}$ and $L_{i+1, \pi}/\bC_{L_{i+1, \pi}}(V_{i1})$ acts faithfully and completely reducibly on $V_{i1}$.


Let $L$ be the pre-image of $\bC_{L_{i+1, \pi}}(V_{i1})$ in $(\Gamma/\bF_{i-1}(\Gamma)/\Phi(\Gamma/\bF_{i-1}(\Gamma)))/V_{i1}$. Write $L = Q V_{i2}$, where $Q \in \Hall_{\pi}(L)$. We have to prove that $|Q| \leq b^{2}$. Clearly $\bF(L)=V_{i2}$ and $\Phi(L)=1$. We know by a theorem of Brauer that $Q$ acts faithfully on $\Irr(V_{i2})$. Replacing $A$ by a conjugate, if necessary, we may assume that $Q \leq A$. It follows from our hypothesis that $|Q : \bC_Q(\chi)| \leq b$ for all $\chi \in \Irr(G)$. 

Now, let $\lambda \in \Irr(V_{i2})$. By ~\cite[Theorem 13.28]{IMIB}, there exists $\chi \in \Irr(G)$ lying over $\lambda$ that is $\bC_Q(\lambda)$-invariant. We claim that $\bC_Q(\chi) = \bC_Q(\lambda)$. It is clear that $|Q : \bC_Q(\lambda)|$ divides the degree of any character of $L$ lying over $\lambda$. Therefore, $|Q : \bC_Q(\lambda)|$ divides the degree of any character of $QG$ lying over $\lambda$ since the pre-image of $L$ in $\Gamma$ is normal in $\Gamma$. Now, ~\cite[Corollary 8.16]{IMIB} and Clifford's correspondence ~\cite[Theorem 6.11]{IMIB} yield that there exist $\psi \in \Irr(QG)$ lying over $\chi$, whence over $\lambda$, such that $\psi(1)_{\pi} = |Q: \bC_Q(\chi)|$. It follows that $\bC_Q(\chi) = \bC_Q(\lambda)$, as desired. In particular, $|Q: \bC_Q(\lambda)| \leq b$. We deduce that for all $\lambda \in \Irr(V_{i2})$, $|Q: \bC_Q(\lambda)| \leq b$. Now, ~\cite{IMI2}, for instance, implies that $|Q| \leq b^2$. Also, if $Q$ is abelian, then $|Q| \leq b$.  The order of the maximum abelian quotient of $Q$ is less than or equal to $b$ by Theorem ~\ref{orbitabelianquotient}. This completes the proof of the theorem.
\end{proof}

Now we are ready to prove Theorem A, which we restate.\\

\begin{theorem} \label{coprimeactionchar}
Let $A$ be a solvable $\pi$-group that acts faithfully on a solvable $\pi'$-group $G$. Let $b$ be an integer such that $|A : \bC_A(\chi)| \leq b$ for all $\chi \in \Irr(G)$. Then $|A| \leq b^{49}$.
\end{theorem}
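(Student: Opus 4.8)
The plan is to pass to the semidirect product $\Gamma = AG$ and read off $|A|$ from the Fitting series of $\Gamma$, exactly as Theorem \ref{pigroupaction} is set up to allow. Since $A$ is a Hall $\pi$-subgroup of $\Gamma$ and $G \nor \Gamma$ is the normal Hall $\pi'$-subgroup, one has $|A| = \prod_{i \ge 1} |K_{i,\pi}|$, where $K_{i,\pi}$ is the Hall $\pi$-subgroup of the $i$-th Fitting factor $K_i = \bF_i(\Gamma)/\bF_{i-1}(\Gamma)$. Thus it suffices to bound this product by $b^{49}$. For each level I would factor $|K_{i,\pi}| = |\bC_{K_{i,\pi}}(V_{(i-1)1})| \cdot |K_{i,\pi} : \bC_{K_{i,\pi}}(V_{(i-1)1})|$. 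The first factor is controlled directly by Theorem \ref{pigroupaction} applied with $K = \Gamma$, so that $L_{i,\pi} = K_{i,\pi}$: this gives $|\bC_{K_{i,\pi}}(V_{(i-1)1})| \le b^2$, with maximal abelian quotient at most $b$. The second factor is the order of the image of $K_{i,\pi}$ in its faithful, completely reducible action on the $\pi$-part $V_{(i-1)1}$ of the preceding factor.

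The main obstacle is twofold. First, the action of $K_{i,\pi}$ on $V_{(i-1)1}$ is that of a $\pi$-group on a $\pi$-group, hence is \emph{not} coprime, and the hypothesis on $\Irr(G)$ supplies no orbit bound there; Theorem \ref{pigroupaction} controls only the part centralizing $V_{(i-1)1}$, i.e. the part acting faithfully and coprimely on the $\pi'$-piece $V_{(i-1)2}$. Second, and more seriously, a naive level-by-level estimate would cost a factor of $b^2$ per Fitting layer, and the Fitting height of $\Gamma$ is not bounded by an absolute constant, so this alone cannot yield a constant exponent. To keep the exponent constant I would exploit that, by coprimeness, $A$ acts faithfully and completely reducibly on the single module $W = \bigoplus_i V_{i2}$ built from the $\pi'$-chief factors of $\Gamma$ inside $G$, since an automorphism group acting coprimely and trivially on every factor of an invariant normal series is trivial, whence $\bC_A(W) = \bC_A(G) = 1$. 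This viewpoint lets one play the centralizers $\bC_{K_{i,\pi}}(V_{(i-1)1})$ at different levels against one another through their joint faithful action on $W$, rather than summing their contributions blindly.

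Concretely, I would arrange the non-coprime pieces so that they telescope: the image $K_{i,\pi}/\bC_{K_{i,\pi}}(V_{(i-1)1})$ acting faithfully on $V_{(i-1)1}$ should be bounded by feeding the orbit data extracted in the proof of Theorem \ref{pigroupaction} into Theorem \ref{orbitabelianquotient}, which bounds the abelian quotient of each such linear group by the largest relevant orbit size, and then inducting on the derived length of the individual nilpotent factor $K_{i,\pi}$. The abelian-quotient bounds furnished by both Theorem \ref{pigroupaction} and Theorem \ref{orbitabelianquotient} are precisely what stop the exponent from growing with the derived lengths of the separate factors. Assembling the contributions, the centralizer blocks (each $b^2$, abelian quotient $b$) and the faithful-action blocks combine, after careful bookkeeping across the $\pi$/$\pi'$ split and the Fitting levels, into a single fixed power of $b$.

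I expect the hardest and most delicate point to be exactly this bookkeeping, which is what turns the structural bounds into the explicit constant $49$: one must simultaneously account for the non-coprime $\pi$-on-$\pi$ contributions that carry no orbit hypothesis, and ensure that the pairing of each $\pi$-layer with its adjacent coprime $\pi'$-layers is tight enough that the summed exponents over all levels remain bounded by an absolute constant. This is where the strengthening in Theorem \ref{orbitabelianquotient} over Proposition \ref{prop1} is essential: replacing $|V|$ by the largest orbit size is what allows the hypothesis bound $b$ to enter each factor, and is what keeps the accumulated exponent finite and small.
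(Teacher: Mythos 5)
You correctly identify the central obstacle---that a cost of roughly $b^2$ per Fitting layer, summed over an unbounded number of layers, cannot yield a constant exponent---but your proposal does not actually overcome it. The paper's mechanism is precisely the one ingredient missing from your outline: after applying Gaschutz's theorem to get a faithful completely reducible action of $\Gamma/\bF(\Gamma)$ on $\Irr(\bF(\Gamma)/\Phi(\Gamma))$, it invokes \cite[Theorem 3.3]{YY1} to produce a character $\lambda \in \Irr(\bF(\Gamma)/\Phi(\Gamma))$ whose centralizer $T = \bC_{\Gamma}(\lambda)$ lies inside $\bF_8(\Gamma)$. A Clifford-theoretic argument then shows $|\Gamma : T|_{\pi} \leq b$: any $\chi \in \Irr(G)$ over $\lambda$ extends to its stabilizer in $\Gamma$, so some member of $\Irr(\Gamma)$ over $\chi$ has degree $\chi(1)\,|A : \bC_A(\chi)|$, and every such character lies over $\lambda$, hence has degree divisible by $|\Gamma : T|$; thus $|\Gamma : T|_{\pi}$ divides $|A : \bC_A(\chi)| \leq b$. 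This truncates the problem: only the seven layers $K_{2,\pi}, \dots, K_{8,\pi}$ need individual estimates, and everything above $\bF_8(\Gamma)$ is absorbed into the single factor $|\Gamma : T|_{\pi} \leq b$. Without a statement of this kind, no bookkeeping, however careful, produces an absolute constant.

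Your proposed substitute---the faithful completely reducible action of $A$ on $W = \bigoplus_i V_{i2}$, with centralizers at different levels ``played against one another''---is a hope, not an argument. The hypothesis bounds orbits on $\Irr(G)$, and the proof of Theorem~\ref{pigroupaction} converts this into orbit bounds of size at most $b$ on $\Irr(V_{i2})$ \emph{one layer at a time}, via \cite[Theorem 13.28]{IMIB} and the Clifford correspondence; you give no mechanism for gluing characters of the distinct sections $V_{i2}$ into a single character of $G$ whose $A$-centralizer controls a joint centralizer on $\Irr(W)$. If such gluing were available, Isaacs' theorem \cite{IMI2} would immediately give $|A| \leq b^2$, far stronger than either the paper's bound or Moret\'o's, which is a sign that this step is not routine but is in fact the whole difficulty. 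A secondary misdirection: you propose to bound the non-coprime pieces $K_{i,\pi}/\bC_{K_{i,\pi}}(V_{(i-1)1})$ by feeding orbit data into Theorem~\ref{orbitabelianquotient}, but as you yourself observe, the $\pi$-on-$\pi$ action carries no orbit hypothesis, so Theorem~\ref{orbitabelianquotient} has nothing to act on there. The paper instead bounds these pieces by the Manz--Wolf theorem \cite[Theorem 3.3]{MAWOLF} for nilpotent linear groups, $|K_{i+1,\pi}/\bC_{K_{i+1,\pi}}(V_{i1})| \leq |V_{i1}|^{\beta}/2$ with $\beta = \log(32)/\log(9)$, and bounds abelian quotients by $|V_{i1}|$ via Proposition~\ref{prop1}; the size of $|V_{i1}|$ is then controlled inductively because $V_{i1}$ is an elementary abelian quotient of $K_{i,\pi}$, whose maximal abelian quotient was bounded at the previous level (seeded by the bound $b$ from Theorem~\ref{pigroupaction}, where Theorem~\ref{orbitabelianquotient} is genuinely used). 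That inductive chain, combined with the truncation at $\bF_8(\Gamma)$, is what yields $|A| \leq b^{15+21\beta} \leq b^{49}$.
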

\begin{proof}
Let $\Gamma = AG$ be the semidirect product of $A$ and $G$. By Gaschutz's theorem, $\Gamma/\bF(\Gamma)$ acts faithfully and completely reducibly on $\Irr(\bF(\Gamma)/\Phi(\Gamma))$. It follows from ~\cite[Theorem 3.3]{YY1} that there exists $\lambda \in \Irr(\bF(\Gamma)/\Phi(\Gamma))$ such that $T = \bC_{\Gamma}(\lambda) \leq \bF_8(\Gamma)$.

Let $K_{i+1}=\bF_{i+1}(\Gamma)/\bF_i(\Gamma)$ and let $K_{i+1, \pi}$ be the Hall $\pi$-subgroup of $K_{i+1}$ for all $i \geq 1$.

We know that $K_{i+1, \pi}$ acts faithfully and completely reducibly on $K_i/\Phi(\Gamma/\bF_{i-1}(\Gamma))$. It is clear that we may write $K_i/\Phi(\Gamma/\bF_{i-1}(\Gamma))=V_{i1}+V_{i2}$ where $V_{i1}$ is the $\pi$ part of $K_i/\Phi(\Gamma/\bF_{i-1}(\Gamma))$ and $V_{i2}$ is the $\pi'$ part of $K_i/\Phi(\Gamma/\bF_{i-1}(\Gamma))$ for all $i \geq 1$. Clearly $\bC_{K_{i+1, \pi}}(V_{i1})$ acts faithfully and completely reducibly on $V_{i2}$. Thus $|\bC_{K_{i+1, \pi}}(V_{i1})| \leq b^2$ and the order of the maximum abelian quotient of $\bC_{K_{i+1, \pi}}(V_{i1})$ is less than or equal to $b$ by Theorem ~\ref{pigroupaction}. Also $K_{i+1, \pi}/\bC_{K_{i+1, \pi}}(V_{i1})$ acts faithfully and completely reducibly on $V_{i1}$. Since $K_{i+1, \pi}/\bC_{K_{i+1, \pi}}(V_{i1})$ is nilpotent, $|K_{i+1, \pi}/\bC_{K_{i+1, \pi}}(V_{i1})| \leq |V_{i1}|^{\beta}/2$ where $\beta=\log(32)/\log (9)$ by ~\cite[Theorem 3.3]{MAWOLF}. Also the order of the maximum abelian quotient of $K_{i+1, \pi}/\bC_{K_{i+1, \pi}}(V_{i1})$ is bounded above by $|V_{i1}|$ by Proposition ~\ref{prop1}.

Thus we have the following, $|K_{2,\pi}| \leq b^2$ and the order of the maximum abelian quotient of $K_{2,\pi}$ is bounded above by $b$.

$|K_{3,\pi}| \leq |\bC_{K_{3, \pi}}(V_{21})| \cdot |K_{3, \pi}/\bC_{K_{3, \pi}}(V_{21})| \leq b^2 \cdot b^{\beta}$ and the order of the maximum abelian quotient of $K_{3,\pi}$ is bounded above by $b \cdot b=b^2$.

$|K_{4,\pi}| \leq |\bC_{K_{4, \pi}}(V_{31})| \cdot |K_{4, \pi}/\bC_{K_{4, \pi}}(V_{31})| \leq b^2 \cdot b^{2\beta}$ and the order of the maximum abelian quotient of $K_{4,\pi}$ is bounded above by $b \cdot b^2=b^3$.

$|K_{5,\pi}| \leq |\bC_{K_{5, \pi}}(V_{41})| \cdot |K_{5, \pi}/\bC_{K_{5, \pi}}(V_{41})| \leq b^2 \cdot b^{3\beta}$ and the order of the maximum abelian quotient of $K_{5,\pi}$ is bounded above by $b \cdot b^3=b^4$.

$|K_{6,\pi}| \leq |\bC_{K_{6, \pi}}(V_{51})| \cdot |K_{6, \pi}/\bC_{K_{6, \pi}}(V_{51})| \leq b^2 \cdot b^{4\beta}$ and the order of the maximum abelian quotient of $K_{6,\pi}$ is bounded above by $b \cdot b^4=b^5$.

$|K_{7,\pi}| \leq |\bC_{K_{7, \pi}}(V_{61})| \cdot |K_{7, \pi}/\bC_{K_{7, \pi}}(V_{61})| \leq b^2 \cdot b^{5\beta}$ and the order of the maximum abelian quotient of $K_{7,\pi}$ is bounded above by $b \cdot b^5=b^6$.

$|K_{8,\pi}| \leq |\bC_{K_{8, \pi}}(V_{71})| \cdot |K_{8, \pi}/\bC_{K_{8, \pi}}(V_{71})| \leq b^2 \cdot b^{6\beta}$.



Next, we show that $|\Gamma : T|_{\pi} \leq b$.

Let $\chi$ be any irreducible character of $G$ lying over $\lambda$. Then every irreducible character of $\Gamma$ that lies over $\chi$ also lies over $\lambda$ and hence has degree divisible by $|\Gamma : T|$. But $\chi$ extends to its stabilizer in $\Gamma$ and thus some irreducible character of $\Gamma$ lying over $\chi$ has degree $\chi(1) |A : C_A(\chi)|$. The $\pi$-part of $|\Gamma : T|$, therefore, divides $|A : \bC_A(\chi)|$, which is at most $b$. 

This gives that $|A| \leq b^2 \cdot b^2 \cdot b^{\beta} \cdot b^2 \cdot b^{2\beta} \cdot b^2 \cdot b^{3\beta} \cdot b^2 \cdot b^{4\beta} \cdot b^2 \cdot b^{5\beta} \cdot b^2 \cdot b^{6\beta} \cdot b=b^{15+21 \beta} \leq b^{48.124}$
\end{proof}

\begin{theorem} \label{coprimeactionchar23}
Let $A$ be a solvable $\pi$-group that acts faithfully on a solvable $\pi'$-group $G$. Assume that $2,3 \not\in \pi$. Let $b$ be an integer such that $|A : \bC_A(\chi)| \leq b$ for all $\chi \in \Irr(G)$. Then $|A| \leq b^{4}$.
\end{theorem}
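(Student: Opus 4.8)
The plan is to mirror the proof of Theorem~\ref{coprimeactionchar} almost verbatim, replacing the general-prime Fitting-height bound and the general nilpotent linear-group bound by their sharper analogues available when $2,3\notin\pi$. The structural backbone is unchanged: form $\Gamma = AG$, apply Gasch\"utz to get a faithful completely reducible action of $\Gamma/\bF(\Gamma)$ on $\Irr(\bF(\Gamma)/\Phi(\Gamma))$, select a $\lambda$ whose stabilizer $T=\bC_\Gamma(\lambda)$ has small Fitting height, and then bound $|A|$ as a telescoping product of the factors $|K_{i+1,\pi}|$ up to that height, using the character-theoretic argument at the end to absorb one extra factor of $b$ from $|\Gamma:T|_\pi$. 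Theorem~\ref{pigroupaction} still applies unchanged and gives $|\bC_{K_{i+1,\pi}}(V_{i1})|\le b^2$ together with the abelian-quotient bound $\le b$ for each layer.

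First I would replace the orbit-existence input. In Theorem~\ref{coprimeactionchar} the bound $T\le\bF_8(\Gamma)$ comes from \cite[Theorem 3.3]{YY1}; under the hypothesis $2,3\notin\pi$ the relevant regular-orbit / small-stabilizer theorem for $\{2,3\}'$-groups acting on modules gives a dramatically shorter Fitting height --- one should be able to find $\lambda$ with $T=\bC_\Gamma(\lambda)\le\bF_2(\Gamma)$, so that only the single layer $K_{2,\pi}$ contributes before we reach the stabilizer. This is the crucial substitution, since the exponent in the final bound is essentially the Fitting height at which the telescoping product terminates.

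Next I would replace the nilpotent linear-group bound. In the general proof the factor $|K_{i+1,\pi}/\bC_{K_{i+1,\pi}}(V_{i1})|\le|V_{i1}|^{\beta}/2$ with $\beta=\log 32/\log 9$ comes from \cite[Theorem 3.3]{MAWOLF}; but for a nilpotent $\{2,3\}'$-group acting faithfully and completely reducibly the order is bounded linearly, i.e. $|K_{i+1,\pi}/\bC_{K_{i+1,\pi}}(V_{i1})|\le|V_{i1}|$ (the $\{2,3\}'$ case of Manz--Wolf removes the extra exponent). Then with $T\le\bF_2(\Gamma)$ only $K_{2,\pi}$ survives: $|K_{2,\pi}|\le b^2$ from Theorem~\ref{pigroupaction}, the action of $K_{2,\pi}$ itself contributes a further factor bounded by $|V_{11}|\le b$ via the linear bound, and the final character argument contributes the $|\Gamma:T|_\pi\le b$ factor. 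Assembling $b^2\cdot b\cdot b = b^4$ yields the stated bound.

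The main obstacle is locating and correctly invoking the two $\{2,3\}'$-strengthenings --- the short Fitting-height bound giving $T\le\bF_2(\Gamma)$ and the linear (exponent-one) nilpotent module bound --- and verifying that Theorem~\ref{pigroupaction}'s per-layer estimates combine with them to give exactly the four factors of $b$ with no slack. One must take care that the completely-reducible-action and coprimeness hypotheses are preserved after passing to $\Gamma/\bF_{i-1}(\Gamma)$ and its Frattini quotient, so that both replacement theorems genuinely apply to each relevant layer; the arithmetic itself is then routine.
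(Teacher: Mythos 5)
The crux of your proposal --- replacing the paper's structural input by a theorem asserting that, because $2,3\notin\pi$, one can choose $\lambda$ with $T=\bC_\Gamma(\lambda)\le\bF_2(\Gamma)$ --- is a genuine gap, and I do not believe such a theorem exists in the form you need. The hypothesis $2,3\notin\pi$ restricts only $A$. The group acting on $\Irr(\bF(\Gamma)/\Phi(\Gamma))$ is $\Gamma/\bF(\Gamma)$, which contains the image of $G$, an arbitrary solvable $\pi'$-group whose order may be divisible by $2$ and $3$; moreover this action is not coprime. So no regular-orbit or short-Fitting-height theorem for $\{2,3\}'$-groups (Espuelas-type or otherwise) applies to this action, and one cannot control the location of the \emph{full} stabilizer $T$ --- only its $\pi$-part can be expected to be small. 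What the paper actually invokes is \cite[Theorem 3.2]{YY11}: there exist $\lambda$ and a normal subgroup $K\nor\Gamma$ with $T\subseteq K$ and $\bF(\Gamma)\subseteq K\subseteq\bF_3(\Gamma)$ --- Fitting height \emph{three}, not two --- together with the extra information that the $\pi$-parts of $(K\cap\bF_2(\Gamma))/\bF(\Gamma)$ and of $K\bF_2(\Gamma)/\bF_2(\Gamma)$ are \emph{abelian}. It is this abelianness, fed into the abelian clause of Theorem~\ref{pigroupaction} (centralizer bound $b$ instead of $b^2$), the abelian-quotient clause of Theorem~\ref{pigroupaction} (giving $|V_{21}|\le b$), and Proposition~\ref{prop1} applied to the abelian group $L_{3,\pi}/\bC_{L_{3,\pi}}(V_{21})$, that yields $|A|\le b\cdot(b\cdot b)\cdot b=b^4$. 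No sharpened Manz--Wolf bound for nilpotent $\{2,3\}'$-groups enters the paper's argument at all, so your second substitution, even if the cited strengthening is correct, is aimed at a factor that does not occur.

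Your arithmetic is also internally inconsistent. Since $A$ acts faithfully and coprimely, $O_\pi(\Gamma)=1$, so $\bF(\Gamma)$ is a $\pi'$-group and $V_{11}$ is trivial; hence the ``further factor bounded by $|V_{11}|\le b$ from the action of $K_{2,\pi}$ itself'' is vacuous, and it double counts in any case, since $|A|$ is exactly the product of the $\pi$-parts of the layers below $T$ with the index $|\Gamma:T|_\pi$. Indeed, if your claimed $T\le\bF_2(\Gamma)$ were available, the correct count would be $|A|\le|K_{2,\pi}|\cdot|\Gamma:T|_\pi\le b^2\cdot b=b^3$, strictly better than the theorem being proved --- which is itself a warning sign that the claimed input is too strong to be true. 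As written, the proposal does not close the proof: its essential ingredient is unavailable, and the paper's route (containment in a normal subgroup $K\subseteq\bF_3(\Gamma)$ with abelian $\pi$-layers, rather than a shorter Fitting series) is genuinely different.
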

\begin{proof}
Let $\Gamma = AG$ be the semidirect product of $A$ and $G$. By Gaschutz's theorem, $\Gamma/\bF(\Gamma)$ acts faithfully and completely reducibly on $\Irr(\bF(\Gamma)/\Phi(\Gamma))$. It follows from ~\cite[Theorem 3.2]{YY11} that there exists $\lambda \in \Irr(\bF(\Gamma)/\Phi(\Gamma))$ and $K \nor \Gamma$ such that $T = \bC_{\Gamma}(\lambda) \subseteq K$, $\bF(\Gamma) \subseteq K \subseteq \bF_3(\Gamma)$. The $\pi$-subgroup of $K \bF_2(\Gamma)/\bF_2(\Gamma)$ and the $\pi$-subgroup of $(K \cap \bF_2(\Gamma))/ \bF(\Gamma)$ are abelian.

Let $K_{i+1}=\bF_{i+1}(\Gamma)/\bF_i(\Gamma)$ and let $K_{i+1, \pi}$ be the Hall $\pi$-subgroup of $K_{i+1}$ for all $i \geq 1$.

We know that $K_{i+1, \pi}$ acts faithfully and completely reducibly on $K_i/\Phi(\Gamma/\bF_{i-1}(\Gamma))$. It is clear that we may write $K_i/\Phi(\Gamma/\bF_{i-1}(\Gamma))=V_{i1}+V_{i2}$ where $V_{i1}$ is the $\pi$ part of $K_i/\Phi(\Gamma/\bF_{i-1}(\Gamma))$ and $V_{i2}$ is the $\pi'$ part of $K_i/\Phi(\Gamma/\bF_{i-1}(\Gamma))$ for all $i \geq 1$. Clearly $\bC_{K_{i+1, \pi}}(V_{i1})$ acts faithfully and completely reducibly on $V_{i2}$ and $K_{i+1, \pi}/\bC_{K_{i+1, \pi}}(V_{i1})$ acts faithfully and completely reducibly on $V_{i1}$.


Since the image of $K \cap \bF_2(\Gamma)$ in $K_{2, \pi}$ is abelian. $|(K \cap \bF_2(\Gamma))/ \bF(\Gamma)| \leq b$ and the order of the maximum abelian quotient of $K_{2,\pi}$ is bounded above by $b$ by Theorem ~\ref{pigroupaction}.

Since $L_{3, \pi}=K/\bF_2(\Gamma)$ is abelian. $|L_{3,\pi}| \leq |\bC_{L_{3, \pi}}(V_{21})| \cdot |L_{3, \pi}/\bC_{L_{3, \pi}}(V_{21})| \leq b \cdot b$ by Theorem ~\ref{pigroupaction} and Proposition ~\ref{prop1}.

Next, we show that $|\Gamma : T|_{\pi} \leq b$.

Let $\chi$ be any irreducible character of $G$ lying over $\lambda$. Then every irreducible character of $\Gamma$ that lies over $\chi$ also lies over $\lambda$ and hence has degree divisible by $|\Gamma : T|$. But $\chi$ extends to its stabilizer in $\Gamma$ and thus some irreducible character of $\Gamma$ lying over $\chi$ has degree $\chi(1) |A : C_A(\chi)|$. The $\pi$-part of $|\Gamma : T|$, therefore, divides $|A : \bC_A(\chi)|$, which is at most $b$. 

This gives that $|A| \leq b \cdot b \cdot b \cdot b \leq b^{4}$.
\end{proof}

Since when $(|A|,|G|)=1$, the orbit sizes of $A$ on $\Irr(G)$ are the same as the orbit sizes in the natural action of $A$ on the conjugacy classes of $G$, the following results immediately follow from the previous ones.

\begin{theorem} \label{coprimeactionconj}
Let $A$ be a solvable $\pi$-group that acts faithfully on a solvable $\pi'$-group $G$. Let $b$ be an integer such that $|A : \bC_A(C)| \leq b$ for all $C \in \cl(G)$. Then $|A| \leq b^{49}$.
\end{theorem}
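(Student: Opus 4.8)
The plan is to deduce Theorem~\ref{coprimeactionconj} directly from Theorem~\ref{coprimeactionchar} by exploiting the well-known fact, already noted in the introduction, that under a coprime action the $A$-orbit sizes on $\Irr(G)$ coincide with the $A$-orbit sizes on $\cl(G)$. First I would record that the hypotheses of the two theorems are identical except that the bound $b$ is now imposed on the stabilizer indices $|A:\bC_A(C)|$ for conjugacy classes $C$ rather than on $|A:\bC_A(\chi)|$ for characters $\chi$. Since $A$ is a $\pi$-group acting on a $\pi'$-group $G$, the orders $|A|$ and $|G|$ are coprime, so the orbit-size correspondence applies verbatim.

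\medskip

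The key step is to make the correspondence quantitative at the level of individual stabilizers, or at least at the level of the set of orbit sizes. Concretely, the Glauberman-type correspondence for coprime actions furnishes a bijection between the set of $A$-orbits on $\Irr(G)$ and the set of $A$-orbits on $\cl(G)$ that preserves orbit lengths; equivalently, the multiset $\{|A:\bC_A(\chi)| : \chi\in\Irr(G)\}$ equals the multiset $\{|A:\bC_A(C)| : C\in\cl(G)\}$. Granting this, the inequality $|A:\bC_A(C)|\le b$ for all $C\in\cl(G)$ is equivalent to the inequality $|A:\bC_A(\chi)|\le b$ for all $\chi\in\Irr(G)$. I would then simply invoke Theorem~\ref{coprimeactionchar} with this same $b$ to conclude $|A|\le b^{49}$.

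\medskip

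The one point requiring care, and the place I expect the only real obstacle, is justifying that the correspondence preserves orbit sizes rather than merely giving a bijection of orbit sets. What is needed is that $A$ has an orbit of size $s$ on $\Irr(G)$ if and only if it has an orbit of size $s$ on $\cl(G)$, which is exactly the statement quoted in the introduction and attributed there to the coprimeness hypothesis (compare the discussion around the reference to \cite{KELL}). Since the statement of Theorem~\ref{coprimeactionchar} is really a statement about the largest orbit size of $A$ (an upper bound $b$ on all orbit sizes forces $|A|\le b^{49}$), it suffices that the largest $A$-orbit on $\cl(G)$ has the same size as the largest $A$-orbit on $\Irr(G)$, which follows from the orbit-size correspondence. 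Thus the proof reduces to one invocation of that standard coprime-action fact followed by one application of Theorem~\ref{coprimeactionchar}.

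\medskip

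In summary, the proof is essentially a one-line reduction: translate the conjugacy-class hypothesis into the character hypothesis via the coprime orbit-size correspondence, and then apply Theorem~\ref{coprimeactionchar} verbatim. No new computation is needed, and the bound $b^{49}$ carries over unchanged.
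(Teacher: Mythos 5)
Your proposal is correct and is exactly the paper's own argument: the paper deduces Theorem~\ref{coprimeactionconj} from Theorem~\ref{coprimeactionchar} in one line, citing the standard fact that for coprime actions the $A$-orbit sizes on $\Irr(G)$ coincide with those on $\cl(G)$. Your additional care about the correspondence preserving orbit sizes (not just giving a bijection of orbit sets) is precisely the right point to flag, and it is the same justification the paper relies on implicitly.
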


\begin{theorem} \label{coprimeactionconj23}
Let $A$ be a solvable $\pi$-group that acts faithfully on a solvable $\pi'$-group $G$. Assume that $2,3 \not\in \pi$. Let $b$ be an integer such that $|A : \bC_A(C)| \leq b$ for all $C \in \cl(G)$. Then $|A| \leq b^{4}$.
\end{theorem}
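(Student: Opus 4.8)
The plan is to deduce this statement as a formal corollary of Theorem~\ref{coprimeactionchar23}, using the correspondence between the two actions that holds in the coprime setting. Since $A$ is a $\pi$-group and $G$ is a $\pi'$-group, we have $(|A|,|G|)=1$, so $A$ acts coprimely on $G$. Under this hypothesis it is classical that the permutation character of $A$ on $\Irr(G)$ agrees, as a class function on $A$, with the permutation character of $A$ on $\cl(G)$: for each $a \in A$ of order coprime to $|G|$, Brauer's permutation lemma (applied to the simultaneous action of $\langle a \rangle$ on the rows and columns of the character table of $G$) gives that $a$ fixes the same number of elements of $\Irr(G)$ as of $\cl(G)$.

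First I would record the precise reformulation I need. Equality of these two permutation characters forces, for every subgroup $H \leq A$, equality of the number of common fixed points of $H$ on the two sets, since this number is the average of the permutation character over $H$. Thus the tables of marks of the two $A$-sets $\Irr(G)$ and $\cl(G)$ coincide, and consequently the two $A$-sets are isomorphic. In particular the multisets $\{\,|A:\bC_A(\chi)| : \chi \in \Irr(G)\,\}$ and $\{\,|A:\bC_A(C)| : C \in \cl(G)\,\}$ are equal. This is exactly the orbit-size statement quoted in the paragraph preceding the theorem. With this in hand the argument is immediate: the hypothesis $|A:\bC_A(C)| \leq b$ for all $C \in \cl(G)$ says every $A$-orbit on $\cl(G)$ has size at most $b$, so by the orbit-size correspondence every $A$-orbit on $\Irr(G)$ also has size at most $b$, i.e.\ $|A:\bC_A(\chi)| \leq b$ for all $\chi \in \Irr(G)$. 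Applying Theorem~\ref{coprimeactionchar23}, whose hypotheses ($A$ a solvable $\pi$-group acting faithfully on a solvable $\pi'$-group with $2,3 \not\in \pi$) are identical, then yields $|A| \leq b^4$.

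The main (and essentially only) obstacle is ensuring the orbit-size correspondence is invoked correctly. It relies crucially on coprimeness, which is why the $\pi/\pi'$ hypotheses are indispensable, and one must justify it through the fixed-point-counting result rather than by asserting a canonical $A$-equivariant bijection $\Irr(G) \to \cl(G)$: no such natural bijection need exist, and only the orbit-size data is forced to agree. Beyond this step no further work is needed, so the theorem is a direct consequence of Theorem~\ref{coprimeactionchar23}.
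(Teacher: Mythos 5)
Your overall route is exactly the paper's: deduce the theorem from Theorem~\ref{coprimeactionchar23} via the fact that, for a coprime action, the orbit sizes of $A$ on $\Irr(G)$ and on $\cl(G)$ coincide (the paper simply invokes this fact as well known). The gap is in your justification of that fact. By Cauchy--Frobenius, the average of a permutation character over a subgroup $H \leq A$ is the number of $H$-\emph{orbits}, not the number of points fixed by every element of $H$; fixed-point numbers of noncyclic subgroups are not determined by the permutation character at all. Consequently, equality of the two permutation characters (which Brauer's permutation lemma does give, and in fact without any coprimeness) does not yield equality of tables of marks, does not yield permutation isomorphism, and does not even determine the multiset of orbit sizes. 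Concretely, over $A=S_3$ the disjoint union of the regular $S_3$-set with two fixed points, and the $S_3$-set $S_3/C_2 \sqcup S_3/C_2 \sqcup S_3/C_3$, both have permutation character $3\cdot 1 + \epsilon + 2\rho$ (where $\epsilon$ is the sign character and $\rho$ the degree-two character), yet their orbit sizes are $\{6,1,1\}$ versus $\{3,3,2\}$. So the step from permutation-character equality to orbit-size equality is invalid as stated.

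The correct way to close the gap is to cite the genuinely stronger theorem that lies behind the paper's remark: if a \emph{solvable} group $A$ acts coprimely on $G$, then the actions of $A$ on $\Irr(G)$ and on the conjugacy classes of $G$ are permutation isomorphic (see \cite[Theorem 13.24]{IMIB}). Its proof is deeper than Brauer's permutation lemma: it proceeds by induction on $|A|$ using Glauberman's lemma, the base case being cyclic $A$, which is the one case where element-wise fixed-point counts do determine the action (every subgroup of a cyclic group is cyclic). Since $A$ is solvable in the present theorem, that result applies verbatim, and with it the remainder of your argument --- transferring the hypothesis from classes to characters and applying Theorem~\ref{coprimeactionchar23} --- is precisely the paper's proof.
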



\end{document}